\theoremstyle{plain}
\newtheorem{thm}{Theorem}
\newtheorem{lem}{Lemma}
\newtheorem{cor}{Corollary}
\newtheorem{prop}{Proposition}
\theoremstyle{definition}
\newtheorem{dfn}{Definition}
\theoremstyle{remark}
\newtheorem*{ackn}{Acknowledgment}
\newcommand{\N}{\mathbb{N}}
\newcommand{\R}{\mathbb{R}}
\newcommand{\grad}{\nabla}
\title{Characterizations of  second-order differential operators}
\author{Włodzimierz Fechner, Eszter Gselmann and Aleksandra \'Swiątczak}
\begin{document}

\maketitle

\begin{abstract}
 {Let $N, k$ be positive integers with $k\geq 2$, and $\Omega \subset \mathbb{R}^{N}$ be a domain.} By the well-known properties of the Laplacian and the gradient, we have 
\[
  \grad^2(f\cdot g)=g  \grad^2 f+f \grad^2 g+2\langle \grad f, \grad g\rangle 
\]
for all $f, g\in \mathscr{C}^{k}(\Omega, \mathbb{R})$.  {Due to the results of H.~K\"{o}nig and V.~Milman, \emph{Operator relations characterizing derivatives}.
Birkh\"{a}user / Springer, Cham, 2018.,} the converse is also true,  {i.e. this operator equation characterizes the Laplacian and the gradient} under some assumptions. Thus the main aim of this paper is to provide an extension of this result and to study the corresponding equation 
\[
  T(f\cdot g)= fT(g)+T(f)g+2B(A(f), A(g)) 
  \qquad 
  \left(f, g\in P\right), 
\]
where $Q$ and $R$ are commutative rings, $P$ is a subring of $Q$ and $T\colon P\to Q$ and $A\colon P\to R$ are additive, while  $B\colon R\times R\to Q$ is a symmetric and bi-additive. Related identities with one function will also be considered.
\end{abstract}

\section{Introduction}

Let $Q$ and $R$ be commutative rings and $P$ be a subring of $Q$. In this paper we will study additive mappings $T\colon P\to Q$ and $A\colon P\to R$  for which we have 
\[
\tag{$\Delta$}\label{Eq1}
 T(f\cdot g)= fT(g)+T(f)g+2B(A(f), A(g))
\]
for all $f, g\in P$ with an appropriate symmetric and bi-additive mapping $B\colon R\times R\to Q$. 



As we will see  equation \eqref{Eq1} plays an important role in the characterization of the Laplacian. 
Indeed, if $ N$ is a positive integer and $\Omega\subset \mathbb{R}^{N}$ is a domain (by a domain we mean a nonempty, open and connected set) and we set $P= \mathscr{C}^{2}(\Omega, \mathbb{R})$, $Q= \mathscr{C}(\Omega, \mathbb{R})$, $R=\mathscr{C}(\Omega, \mathbb{R}^{N})$ (by $\mathscr{C}^{k}(\Omega, \mathbb{R})$ with $k$ being a nonnegative integer we mean the space of real-valued $k$-times continuously differentiable functions on $\Omega$). Further, if
\[
 T(f)(x)=   \grad^2 f(x) = \sum_{i=1}^{N}\dfrac{\partial^{2} f}{\partial x_{i}^{2}}(x)
\qquad 
\left(f\in \mathscr{C}^{2}(\Omega, \mathbb{R}), x\in \Omega\right)
\]
and 
\[
 A(f)(x)= f'(x)= \grad f(x)=
 \begin{pmatrix}
  \dfrac{\partial f}{\partial x_{1}}(x)\\
  \vdots \\
  \dfrac{\partial f}{\partial x_{N}}(x)
 \end{pmatrix}
\qquad 
\left(f\in \mathscr{C}^{2}(\Omega, \mathbb{R}), x\in \Omega\right)
 \]
 and 
 \[
 B(u, v)= \langle u, v\rangle = \sum_{i=1}^{N}u_{i}v_{i}
 \qquad 
 \left(u, v\in \mathbb{R}^{N}\right), 
\]
then we have
\begin{multline*}
 T(f\cdot g)(x)=  \grad^2(f\cdot g)(x)=g(x)  \grad^2 f(x)+f(x)  \grad^2 g(x)+2\langle \grad f(x), \grad g(x)\rangle 
 \\
 =
 f(x)T(g)(x)+T(f)(x)g(x)+2B(A(f)(x), A(g)(x)) 
\end{multline*}
for all $f, g\in \mathscr{C}^{2}(\Omega, \mathbb{R})$ and $x\in \Omega$.

One of our main objectives is to prove the converse of this statement. 
More precisely, we intend to answer the question whether equation \eqref{Eq1} is appropriate to characterize second-order differential operators.
Moreover, if we substitute $f$ in place of $g$ in equation \eqref{Eq1}, then we get 
 \[
 \tag{$\ast$}\label{Eq2}
  T(f^{2})= 2fT(f)+2B(A(f), A(f)). 
 \]
Further, if we combine equations \eqref{Eq1} and \eqref{Eq2}, then by induction 
\[
\tag{$\bullet$}\label{bullet}
  T(f^{n})= nf^{n-1}T(f)+nB(A(f), A(f^{n-1}))
\]
follows for each fixed positive integer $n\geq 2$. 
As we will see, the answer to the first question we raised is affirmative, that is, equation \eqref{Eq1} is suitable for characterizing second-order differential operators defined on $\mathscr{C}^{k}(\Omega)$. 
After this, the question naturally arises as to whether instead of \eqref{Eq1} it is sufficient to require the much weaker identities \eqref{Eq2} or \eqref{bullet}  for the operators in question.

Here $Q$ and $R$ are assumed to be commutative rings such that $\mathrm{char}(Q)>2$ and $P$ be a subring of $Q$, while the unknown mappings $T \colon P\to Q$ and $A\colon P\to R$ are additive. Further, $B\colon R\times R\to Q$ is a given symmetric and bi-additive mapping. 

 At first, we consider the above-mentioned identities in a purely algebraic setting (i.e., we are not limited to the case when the above rings are function spaces). 
After that, the obtained results will be applied for $Q=R= \mathscr{C}(\Omega, \mathbb{R})$ and $P= \mathscr{C}^{k}(\Omega, \mathbb{R})$, providing characterization results for first- and second-order differential operators. Here $k$ is a nonnegative integer. 

The antecedents of this topic can be found in the paper \cite{FecSwi22}, where the authors considered  {a special case of the above problems with $B=0$}. Accordingly, the authors obtained characterization theorems for first-order differential operators. As we will see, these results are consequences of that of this paper. 

 In subsections \ref{SS1} and \ref{SS2} we summarize the necessary preliminaries from the theory of functional equation and of operator relations, respectively. Then, the results can be found in Section \ref{S2}.

\subsection{Preliminaries from the theory of functional equations}\label{SS1}

In this subsection, we collect  {briefly} the most important notions and statements related to generalized polynomials, which we will use in the second section. Here we follow the monograph Sz\'{e}kelyhidi \cite{Sze85}.  {In this subsection $S$ is a commutative semigroup and $G$ is a commutative group, both written additively.}

 {
For $n\in \mathbb{N}$ a function $A\colon S^{n}\to G$ is termed \emph{$n$-additive} if in each variable it is a semigroup homomorphism of $S$ into $G$.
If $n=1$ or $n=2$ then the function $A$ is called \emph{additive} or \emph{bi-additive}, respectively.}

 {
The \emph{diagonalization} or \emph{trace} of an $n$-additive
function $A\colon S^{n}\to G$ is defined as
 \[
  A^{\ast}(x)=A\left(x, \ldots, x\right)
  \qquad
  \left(x\in S\right).
 \]}
 {
The above notion can also be extended for the case $n=0$ by letting 
$S^{0}=S$ and by calling $0$-additive any constant function from $S$ to $G$. }

 {
One of the most important theoretical results concerning
multi-additive functions is the so-called \emph{Polarization
formula}, which says that every $n$-additive symmetric
function is \emph{uniquely} determined by its diagonalization under
some conditions on the domain and on the range. The
action of the {\emph{difference operator}} $\Delta$ on a function
$f\colon S\to G$ is defined as
\[\Delta_y f(x)=f(x+y)-f(x)
\qquad
\left(x, y\in S\right). \]
Polarization formula says that if $A\colon S^{n}\to G$ is a symmetric, $n$-additive function, then for all $m\in\mathbb{N}$ and
 $x, y_{1}, \ldots, y_{m}\in S$ we have
 \[
  \Delta_{y_{1}, \ldots, y_{m}}A^{\ast}(x)=
  \left\{
  \begin{array}{rcl}
   0 & \text{ if} & m>n \\
   n!A(y_{1}, \ldots, y_{m}) & \text{ if}& m=n.
  \end{array}
  \right.
 \]
In particular, for all $x, y\in S$ one has
$
  \Delta^{n}_{y}A^{\ast}(x)=n!A^{\ast}(y).
$}

The next corollary will be very useful for us in what follows. However, it is a consequence of the above observations.
\begin{cor}
\label{mainfact}
  Let $n\in \mathbb{N}$ and suppose that the multiplication by $n!$ is surjective in the commutative semigroup $S$ or injective in the commutative group $G$. Then for any symmetric, $n$-additive function $A\colon S^{n}\to G$, $A^{\ast}\equiv 0$ implies that
  $A\equiv 0$, as well.
\end{cor}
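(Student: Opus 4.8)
The plan is to derive the statement directly from the case $m=n$ of the polarization formula recalled above. First I would fix an arbitrary element $x\in S$ together with arbitrary $y_{1},\dots ,y_{n}\in S$. Since $A^{\ast}\equiv 0$, the left-hand side of the polarization formula is an iterated difference of the identically-zero function $A^{\ast}$ and hence vanishes; taking $m=n$ this yields
\[
n!\,A(y_{1},\dots ,y_{n})=0
\]
for all $y_{1},\dots ,y_{n}\in S$, where for $c\in G$ the symbol $n!\,c$ denotes the $n!$-fold sum $c+\dots +c$. Thus the whole problem reduces to promoting the relation $n!\,A\equiv 0$ to $A\equiv 0$, and this is exactly where the two alternative hypotheses enter.

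In the case where multiplication by $n!$ is injective in $G$, I would simply note that $n!\,A(y_{1},\dots ,y_{n})=0=n!\,0$ forces $A(y_{1},\dots ,y_{n})=0$, hence $A\equiv 0$. In the case where multiplication by $n!$ is surjective in $S$, since $S$ is only a semigroup there is no cancellation available; instead I would exploit the $n$-additivity of $A$ in a single variable: given $y_{1},\dots ,y_{n}\in S$, pick $z\in S$ with $n!\,z=y_{1}$ and compute
\[
A(y_{1},y_{2},\dots ,y_{n})=A(n!\,z,y_{2},\dots ,y_{n})=n!\,A(z,y_{2},\dots ,y_{n})=0,
\]
the last equality being the displayed identity above. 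Again $A\equiv 0$ follows.

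I do not expect a serious obstacle here: once the polarization formula is in hand, the corollary is essentially a two-line deduction. The only subtlety worth stating explicitly is that $S$ is merely a semigroup and not a group, so in the surjectivity case the factor $n!$ must be absorbed through additivity of $A$ in one variable rather than removed by a cancellation argument. The border case $n=0$ requires no argument, since then $n!=1$ and a constant (i.e.\ $0$-additive) function vanishing anywhere vanishes everywhere.
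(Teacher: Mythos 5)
Your argument is correct and is exactly the route the paper intends: the corollary is stated there as an immediate consequence of the case $m=n$ of the polarization formula, giving $n!\,A(y_{1},\dots ,y_{n})=0$, after which one either cancels $n!$ by injectivity in $G$ or, as you do, absorbs it via $A(n!\,z,y_{2},\dots ,y_{n})=n!\,A(z,y_{2},\dots ,y_{n})$ using surjectivity in $S$. No gaps; your explicit handling of the semigroup subtlety and the trivial case $n=0$ only makes the deduction more complete than the paper's one-line remark.
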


 {
A function $p\colon S\to G$ is called a \emph{generalized polynomial} if it has a representation as the sum of diagonalizations of symmetric multi-additive functions. In other words, $p$ is a 
 generalized polynomial \emph{of degree at most $n$} if and only if, it has the representation 
 \[
  p= \sum_{k=0}^{n}A^{\ast}_{k}, 
 \]
where $n$ is a nonnegative integer and $A_{k}\colon S^{k}\to G$ is a symmetric, $k$-additive function for each 
$k=0, 1, \ldots, n$.  Moreover, functions $p_{k}\colon S\to G$ of the form 
$ p_{k}= A_{k}^{\ast}$, 
where $A_{k}\colon S^{n}\to G$ are symmetric and $k$-additive are called \emph{generalized monomials of degree $k$}. 
Obviously, generalized monomials of degree $0$ are constant functions and generalized monomials of degree $1$ are additive functions. }

 {
Now, assume that $X$ and $Y$ are linear-topological spaces, $Y$ is partially ordered,  $D$ is a nonvoid, open and convex subset of $X$, $C \subset X$ is a proper cone in $X$, 
{i.e., a subset of $X$ that is closed under positive scalar multiplication} and $n \in \N$. A map $f\colon D \to Y$ is termed \emph{$n$-convex} if $\Delta^{n+1}_h f(x) \geq 0,$ for all $x \in D$ and $h \in C$ such that {$x + i h \in D$} for all $i=1, 2, \dots , n+1$.
It is clear that every generalized polynomial of degree at most $n$ is an $n$-convex function.}

The following theorem is a special case of a result of R. Ger \cite[Theorem 6]{Ger74}.
\begin{thm}[R. Ger]
Assume that $X$ and $Y$ are linear-topological spaces and $Y$ is partially ordered.
If $f\colon D \to Y$ is an $n$-convex function defined on an open and convex subset of $X$ which is bounded on a set of the second category with the Baire property. Then $f$ is continuous on $D$.
\end{thm}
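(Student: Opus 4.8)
This being a special case of R.~Ger's regularity theorem for higher-order convex functions, the plan is to reproduce the classical two-stage argument behind it. Write $m=n+1$, so that ``$f$ is $n$-convex'' reads $\sum_{j=0}^{m}(-1)^{m-j}\binom{m}{j}f(x+jh)\ge 0$ whenever $h\in C$ and $x+ih\in D$ for $i=0,\dots,m$. Stage~I upgrades the hypothesis ``$f$ is bounded on a large set'' to ``$f$ is locally bounded at every point of $D$''; Stage~II upgrades local boundedness to continuity, by induction on $n$.

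\textbf{Stage I (an Ostrowski--Piccard step).} Let $T$ be of the second category with the Baire property and $a\le f\le b$ on $T$ with $a,b\in Y$. Each coefficient $(-1)^{m-j_0}\binom{m}{j_0}$ is nonzero, so the defining inequality can be solved for a single term $f(x+j_0h)$, estimating it on one side by a fixed element of $Y$ built from $a$ and $b$ as soon as $f$ is two-sidedly bounded at the other $m$ nodes of the progression; a second admissible progression placing the same point at a node of the opposite parity supplies the estimate on the other side. Hence two-sided boundedness of $f$ propagates along arithmetic progressions lying in $D$ with step in $C$. To close the argument I would invoke classical category facts of Piccard--Pettis type in the linear-topological group $X$ — a second-category set with the Baire property remains such under translations and dilations, and suitable finite intersections stay non-meager, hence nonempty — to obtain a nonempty open $V\subseteq D$ on which $f$ is bounded, and then, $D$ being open and convex, chain along segments from $V$ to an arbitrary point of $D$, transporting boundedness by the propagation just established.

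\textbf{Stage II (a Bernstein--Doetsch step).} If $n=1$, $f$ is Jensen-convex and locally bounded, hence continuous on $D$: for $x$ near $x_0$ the inequalities $f(x_0)\le\tfrac12 f(x)+\tfrac12 f(2x_0-x)$ and $f(x)\le\tfrac12 f(x_0)+\tfrac12 f(2x-x_0)$, iterated against the local bound, force $f(x)\to f(x_0)$ (only order relations in $Y$ enter, so the mere partial order is harmless). If $n\ge 2$, fix an admissible $h\in C$ and set $\phi_h(x)=f(x+h)-f(x)$. Then $\phi_h$ is $(n-1)$-convex — in the classical real-line case one sees this by telescoping the change of nodes of an arithmetic progression one at a time and invoking nonnegativity of the $m$-th differences of $f$ at each step — and $\phi_h$ is again locally bounded, so by the inductive hypothesis $\phi_h$ is continuous. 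The identity $f(x+h)=f(x)+\phi_h(x)$, valid for all admissible $h$, then shows that $f$ is continuous at $x_0$ if and only if it is continuous at $x_0+h$; consequently the continuity set of $f$ is open and closed in $D$, and, being nonempty, it equals the connected set $D$. Hence $f$ is continuous.

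\textbf{Main obstacle.} Stage~II for $n\ge 2$ is the crux in the present generality. On the real line both the reduction to order $n-1$ via first differences and the recovery of $f$ from the continuity of all its first differences are classical, but in a linear-topological space with $C$ merely a cone one must argue directly with the difference operator along the available directions (no divided differences being at hand), verify that $\phi_h$ genuinely inherits $(n-1)$-convexity, and secure the clopenness and nonemptiness of the continuity set of $f$ — which may require additional input depending on how much of $X$ the span of $C$ covers — all while carrying the partial order on $Y$ through the estimates. Stage~I is technically routine but leans essentially on the Piccard--Pettis category machinery.
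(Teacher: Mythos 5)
First, note that the paper does not prove this statement at all: it is quoted verbatim as a special case of R.~Ger's result \cite{Ger74} and used as a black box, so there is no internal proof to compare against; your proposal has to stand on its own, and as written it does not.

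Your outline is the classical one (a Piccard--Pettis category step to get local boundedness, then a Bernstein--Doetsch-type induction), but the decisive parts are missing, and you partly say so yourself. Two concrete gaps. First, Stage~II for $n\ge 2$ is not an argument but a wish list: you never verify that $\phi_h(x)=f(x+h)-f(x)$ is $(n-1)$-convex with respect to the same cone $C$ in the vector-valued setting, and the closing step --- ``the continuity set of $f$ is clopen and nonempty, hence all of $D$'' --- is unsupported precisely where it matters. Nonemptiness of the continuity set is the whole content of the theorem, and continuity of the differences $\phi_h$ only transports continuity along directions lying in $C$, which is merely \emph{a} cone and need not span $X$ nor connect points of $D$; so clopenness does not follow either. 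Second, and more fundamentally, $Y$ is only assumed to be a partially ordered linear-topological space. Your Bernstein--Doetsch sandwich (and likewise the two-sided estimates in Stage~I) produces \emph{order} bounds on $f(x)-f(x_0)$; passing from order bounds shrinking to $0$ to \emph{topological} convergence requires a compatibility between the order and the topology of $Y$ (normality of the positive cone or similar), which you dismiss with ``the mere partial order is harmless.'' That remark is exactly where the proof can fail for a general partially ordered $Y$, and it is the point where Ger's actual argument in \cite{Ger74} does genuine work. As it stands, the proposal is a reasonable plan for the scalar case but not a proof of the quoted theorem.
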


 {
The above theorem applies a fortiori to generalized polynomials. In particular, if the set of zeros of a generalized polynomial is of the second category with the Baire property, then it is closed. Consequently, it has a nonempty interior and one can apply a result of L. Sz\'{e}kelyhidi \cite{Sze85}, which says that if a generalized polynomial vanishes on a nonempty open set, then it vanishes everywhere. Thus, we have the following corollary.}
\begin{cor}\label{kat}
Assume that $X$ and $Y$ are linear-topological spaces and $Y$ is partially ordered.
If $f\colon X \to Y$ is a generalized polynomial which is equal to zero on a set of the second category with the Baire property, then $f\equiv 0$ on $X$.
\end{cor}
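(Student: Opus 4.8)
The plan is to reduce Corollary~\ref{kat} to two facts already at our disposal: R.~Ger's continuity theorem and the identity principle for generalized polynomials, both recalled above. Let $E\subset X$ be a set of the second category with the Baire property on which $f$ vanishes. Since $f$ is a generalized polynomial it has degree at most $n$ for some $n\in\mathbb{N}$ and is therefore $n$-convex; moreover $f$ is bounded — indeed identically $0$ — on $E$. Hence Ger's theorem, applied with $D=X$, shows that $f$ is continuous on $X$.

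Next I would upgrade the vanishing of $f$ on $E$ to vanishing on a nonempty open set. Since $f$ is continuous, its zero set $Z=\{x\in X : f(x)=0\}$ is closed, and $Z\supseteq E$, so $Z$ is a closed set of the second category. A closed set with empty interior is nowhere dense, hence of the first category; therefore $Z$ must have nonempty interior. (Alternatively, write $E=U\,\triangle\,M$ with $U$ open and $M$ meager; then $U\neq\emptyset$, for otherwise $E\subseteq M$ would be meager, and continuity of $f$ spreads its vanishing from $U\setminus M\subseteq E$ to all of $U$.) Thus $f$ vanishes on a nonempty open subset of $X$, and the result of L.~Sz\'{e}kelyhidi quoted above — a generalized polynomial that is zero on a nonempty open set is zero everywhere — yields $f\equiv 0$ on $X$.

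I do not anticipate a genuine obstacle here, since the heavy lifting is done by the imported theorems and each of the steps above is a line or two. The one place that rewards care is the middle step: one must actually invoke the ``second category'' hypothesis — to guarantee that the open set extracted from $Z$ (or from the Baire decomposition of $E$) is nonempty — and must note that it is precisely the continuity furnished by Ger's theorem that carries the vanishing of $f$ from $E$ to a full neighbourhood. No quantitative estimates are involved.
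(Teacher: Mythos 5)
Your proof is correct and follows essentially the same route the paper uses (in the paragraph preceding the corollary): Ger's theorem gives continuity of the $n$-convex generalized polynomial, so its zero set is closed and, being of the second category, has nonempty interior, whence Sz\'{e}kelyhidi's identity principle yields $f\equiv 0$. No substantive difference from the paper's argument.
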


\subsection{Preliminaries from the theory of operator relations}\label{SS2}

In this subsection we use the notation and the terminology of the monograph K\"{o}nig--Milman \cite{KonMil18}, see also \cite{KonMil11, KonMil12a, KonMil12, KonMil13, KonMil14}. 

\begin{dfn}
 Let $k$ be a nonnegative integer, $N$ and $m$ be positive integers and $\Omega \subset \mathbb{R}^{N}$ be an open set. An operator $A\colon \mathscr{C}^{k}(\Omega, \mathbb{R})\to \mathscr{C}(\Omega, \mathbb{R}^{m})$ is \emph{non-degenerate} if for each nonvoid open subset $U\subset \Omega$ and all $x\in U$, there exist functions $g_{1}, \ldots, g_{m+1}\in \mathscr{C}^{k}(\Omega, \mathbb{R})$ with supports in $U$ such that the vectors $(g_{i}(x), A g_{i}(x))\in \mathbb{R}^{m+1}$, $i=1, \ldots, m+1$ are linearly independent in $\mathbb{R}^{m+1}$. 
\end{dfn}

\begin{dfn}
 Let $k$ and $N$ be positive integers with $k\geq 2$ and $\Omega\subset \mathbb{R}^{N}$ be an open set. We say that the operator $A\colon \mathscr{C}^{k}(\Omega, \mathbb{R})\to \mathscr{C}(\Omega, \mathbb{R})$ \emph{depends non-trivially on the derivative} if there exists $x\in \Omega$ and there are functions $f_{1}, f_{2}\in \mathscr{C}^{k}(\Omega, \mathbb{R})$ such that 
 \[
  f_{1}(x)= f_{2}(x) \quad \text{and} \quad A f_{1}(x)\neq A f_{2}(x)
 \]
holds. 
\end{dfn}

In the second section we will utilize Theorem 7.1 from \cite{KonMil18} which is the statement below. 

\begin{thm}[H. K\"{o}nig, V. Milman]\label{Konig-Milman}
 Let $N$ be a positive integer, $k$ be a nonnegative integer and $\Omega \subset\mathbb{R}^{N}$ be a domain. Assume that $T, A\colon \mathscr{C}^{k}(\Omega, \mathbb{R})\to \mathscr{C}(\Omega, \mathbb{R})$ satisfy 
 \begin{equation}\label{Tfg}
  T(f\cdot g)= T(f)\cdot g+f\cdot T(g)+2A(f)\cdot A(g) 
  \qquad 
  \left(f, g\in \mathscr{C}^{k}(\Omega, \mathbb{R})\right)
 \end{equation}
and that $A$ is non-degenerate and depends non-trivially on the derivative. 
Then there are continuous functions $a\in \mathscr{C}(\Omega, \mathbb{R})$ and $b, c\in \mathscr{C}(\Omega, \mathbb{R}^{N})$ such that we have 
\[
 \begin{array}{rcl}
  T(f)(x)&=& \langle f''(x)c(x), c(x)\rangle +R(f)(x)\\[2mm]
  A(f)(x)&=& \langle f'(x), c(x)\rangle
 \end{array}
\qquad 
\left(f\in \mathscr{C}^{k}(\Omega, \mathbb{R}), x\in \Omega\right), 
\]
where 
\[
 R(f)(x)= \langle f'(x), b(x)\rangle +a(x)f(x)\ln \left(|f(x)|\right) 
 \qquad 
 \left(f\in \mathscr{C}^{k}(\Omega, \mathbb{R})\right). 
\]
If $k=1$, then necessarily $c\equiv 0$. Further, if $k=0$, then necessarily $b\equiv 0$ and $c\equiv 0$.

Conversely, these operators satisfy the above second-order Leibniz rule. 
\end{thm}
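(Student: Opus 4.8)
The converse direction is a direct verification: for $A(f)=\langle f',c\rangle$ and $T(f)=\langle f''c,c\rangle+\langle f',b\rangle+af\ln|f|$, expanding $(fg)'=gf'+fg'$ and $(fg)''=gf''+2f'\odot g'+fg''$ and using the logarithmic Leibniz rule $fg\ln|fg|=g\cdot f\ln|f|+f\cdot g\ln|g|$ (valid on $\{fg\neq 0\}$ and extended continuously through the zero set) regroups the right-hand side of \eqref{Tfg} exactly into $T(f)g+fT(g)+2A(f)A(g)$; the claims $c\equiv 0$ for $k=1$ and $b\equiv c\equiv 0$ for $k=0$ hold because a function of class $\mathscr{C}^1$ (resp.\ $\mathscr{C}^0$) carries no intrinsic second (resp.\ first) derivative, so those terms cannot occur in a well-defined operator. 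The substance is the forward direction, which I would handle in the spirit of K\"{o}nig--Milman; note that $T$ and $A$ are \emph{not} assumed additive here, so linearity of the relevant ``symbols'' has to be derived, not postulated.

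First I would extract normalizations and locality. Evaluating \eqref{Tfg} on the constant functions $\mathbf{0},\mathbf{1}$ and on products with one constant factor fixes $A(\mathbf{0}),A(\mathbf{1}),T(\mathbf{0}),T(\mathbf{1})$, and combined with the non-degeneracy of $A$ (for any $x$, two test functions $g_1,g_2$ supported near $x$ with $(g_i(x),Ag_i(x))$ spanning $\mathbb{R}^2$) it gives $A(\mathbf{1})\equiv 0$ and $T(\mathbf{1})\equiv 0$. The same non-degeneracy device yields \emph{locality}: if $f$ vanishes on an open $U\ni x$ and $g$ is supported in $U$, then $f\cdot g$ vanishes identically, so on $U$ one reads $0=T(f)g+2A(f)A(g)$, whence $T(f)(x)=A(f)(x)=0$; hence $T(f)(x)$ and $A(f)(x)$ depend only on the germ of $f$ at $x$.

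Next I would pass to a pointwise equation. Using the K\"{o}nig--Milman machinery for local operators --- the continuity of $T,A$ that \eqref{Tfg} and locality force, together with the fact that a continuous germ-local operator on $\mathscr{C}^k(\Omega,\mathbb{R})$ has order at most $k$ (approximate a function with vanishing $k$-jet at $x$ by functions vanishing near $x$) --- one obtains that $A(f)(x)$ depends only on $(f(x),f'(x))$ and $T(f)(x)$ only on $(f(x),f'(x),f''(x))$; the non-triviality of $A$ on the derivative excludes the purely $0$-order possibility for $A$, and for $k<2$ the available jet order is capped, which forces $c\equiv 0$, resp.\ $b\equiv c\equiv 0$. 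Writing $A(f)(x)=\mathcal{A}_x(s,p)$ and $T(f)(x)=\Phi_x(s,p,M)$ with $s=f(x),p=f'(x),M=f''(x)$, and inserting $(fg)(x)=st$, $(fg)'(x)=tp+sq$, $(fg)''(x)=tM+2p\odot q+sN$, equation \eqref{Tfg} becomes a functional equation for the continuous symbols $\mathcal{A}_x,\Phi_x$ in the real parameters $(s,p,M),(t,q,N)$, every $2$-jet at $x$ being realized by some $\mathscr{C}^k$ function. I would then solve it by specialization: setting all derivatives to zero (and taking $f=g$) reduces the $s$-dependence to one-variable equations whose continuous solutions produce the term $a(x)\,s\ln|s|$ and force the $0$-order part of $\mathcal{A}_x$ to vanish; letting $f'$ vary makes $\mathcal{A}_x$ linear in $p$, so $\mathcal{A}_x(s,p)=\langle p,c(x)\rangle$ and a first-order remainder $\langle f'(x),b(x)\rangle$ is left in $\Phi_x$; letting $f''$ vary and using $(fg)''(x)=g(x)f''(x)+2f'(x)\odot g'(x)+f(x)g''(x)$ pins the second-order part of $\Phi_x$ to $\langle Mc(x),c(x)\rangle$, the cross term $2f'(x)\odot g'(x)$ matching $2\mathcal{A}_x(s,p)\mathcal{A}_x(t,q)=2\langle f'(x),c(x)\rangle\langle g'(x),c(x)\rangle$. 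Continuity of $a,b,c$ in $x$ then follows from the regularity theory recalled above (Ger's theorem and Corollary \ref{kat}, applied to the generalized polynomials that arise in $\mathcal{A}_x,\Phi_x$).

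The step I expect to be the main obstacle is the pointwise reduction: forcing $T$ and $A$ to be continuous and of bounded jet order --- this is where the non-degeneracy hypothesis, the precise structure of \eqref{Tfg}, and the Baire-category/convexity results (Ger's theorem, Corollary \ref{kat}) must be used together, and where linearity of the symbols must be derived rather than assumed --- and then carrying out the solution of the pointwise equation with due care at $\{f(x)=0\}$, on which the logarithmic term is attached only continuously and not differentiably. The normalization, the localization, and the routine separation of variables are comparatively straightforward.
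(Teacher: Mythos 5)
This theorem is not proved in the paper at all: it is imported verbatim as Theorem~7.1 of the K\"{o}nig--Milman monograph \cite{KonMil18}, so the ``paper's own proof'' is simply the citation, and any argument you give is measured against the original proof there, not against anything in this manuscript. Your converse verification is fine (the expansion of $(fg)'$ and $(fg)''$ and the identity $fg\ln|fg|=gf\ln|f|+fg\ln|g|$ do the job), but the forward direction as you present it is an outline with the decisive steps asserted rather than proved. Concretely: (i) you invoke ``the continuity of $T,A$ that \eqref{Tfg} and locality force'' --- no continuity of the operators is assumed in the theorem, and obtaining any regularity from the relation is itself a substantial part of K\"{o}nig--Milman's analysis, not a given; (ii) from germ-locality and a bounded-order principle you would at best get that $T(f)(x)$ depends on the $k$-jet of $f$ at $x$, whereas the theorem (for arbitrary $k\geq 2$) requires dependence on the $2$-jet only; the drop from order $k$ to order $2$ must be extracted from the specific structure of \eqref{Tfg} and is precisely the heart of the matter, which your sketch skips; the same issue arises for showing $A$ depends only on the $1$-jet rather than merely excluding pure $0$-order dependence; (iii) the solution of the resulting pointwise functional equation, including the appearance of the non-linear term $a(x)f(x)\ln|f(x)|$ and the delicate behaviour at zeros of $f$, is exactly the part you defer ``in the spirit of K\"{o}nig--Milman''; and (iv) the appeal to Ger's theorem and Corollary~\ref{kat} for continuity of $a,b,c$ is misplaced --- those results concern generalized polynomials on linear-topological spaces and are used in this paper for the algebraic extension lemmas, not for the representation theorem, where continuity of the coefficients comes from the fact that $T$ and $A$ take values in $\mathscr{C}(\Omega,\mathbb{R})$ evaluated on suitable test functions.

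In short, your normalization and localization steps (using non-degeneracy with $m+1=2$ test functions to force $T(f)(x)=A(f)(x)=0$ when $f$ vanishes near $x$) are sound and do match the opening moves of the original argument, but what remains is not routine: the jet-order reduction without a priori continuity and the classification of the pointwise symbols constitute the bulk of K\"{o}nig--Milman's proof. As written, the proposal should be regarded as a plausible roadmap, not a proof; within the context of this paper the correct course is simply to cite \cite[Theorem 7.1]{KonMil18}.
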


\begin{cor}\label{Cor_KM}
Under the assumptions of the previous theorem, suppose that the operators $T, A\colon \allowbreak \mathscr{C}^{k}(\Omega, \allowbreak \mathbb{R}) \allowbreak \to \mathscr{C}(\Omega, \mathbb{R})$ are additive. Then $a\equiv 0$ in the above theorem. In other words, there are continuous functions $b, c\in \mathscr{C}(\Omega, \mathbb{R}^{N})$ such that we have 
\begin{equation}
 \begin{array}{rcl}
  \label{TA} T(f)(x)&=& \langle f''(x)c(x), c(x)\rangle + \langle f'(x), b(x)\rangle \\[2mm]
   A(f)(x)&=& \langle f'(x), c(x)\rangle
 \end{array}
\qquad 
\left(f\in \mathscr{C}^{k}(\Omega, \mathbb{R}), x\in \Omega\right). 
\end{equation}
If $k=1$, then necessarily $c\equiv 0$. Further, if $k=0$, then necessarily $b\equiv 0$ and $c\equiv 0$. 

Conversely, these operators are additive and also satisfy the above second-order Leibniz rule. 
\end{cor}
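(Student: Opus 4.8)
The plan is to start from the representation supplied by Theorem~\ref{Konig-Milman} and to isolate the single term that can fail to be additive, namely the ``entropy-type'' summand $x\mapsto a(x)f(x)\ln(|f(x)|)$ occurring in $R(f)$. Since $T$ is assumed additive and the maps $f\mapsto \langle f''(x)c(x),c(x)\rangle$ and $f\mapsto\langle f'(x),b(x)\rangle$ are plainly additive (differentiation is linear, and the inner product against a fixed function is additive in each slot), the difference
\[
 f\longmapsto\big(x\mapsto a(x)f(x)\ln(|f(x)|)\big)
\]
must itself be an additive map from $\mathscr{C}^{k}(\Omega,\mathbb{R})$ to $\mathscr{C}(\Omega,\mathbb{R})$.

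Next I would test this additivity pointwise on constant functions. Fix $x_{0}\in\Omega$ and take $f\equiv g\equiv 1$ on $\Omega$; these lie in $\mathscr{C}^{k}(\Omega,\mathbb{R})$, they have vanishing first and second derivatives, and $f+g\equiv 2$. Evaluating the additivity relation at $x_{0}$ gives
\[
 a(x_{0})\cdot 2\ln 2 = a(x_{0})\cdot 1\cdot\ln 1 + a(x_{0})\cdot 1\cdot\ln 1 = 0,
\]
and since $2\ln 2\neq 0$ we deduce $a(x_{0})=0$. As $x_{0}\in\Omega$ was arbitrary, $a\equiv 0$, and the representation of Theorem~\ref{Konig-Milman} collapses to \eqref{TA}. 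The degeneracy assertions for $k=1$ (forcing $c\equiv 0$) and $k=0$ (forcing $b\equiv 0$ and $c\equiv 0$) are then inherited verbatim from Theorem~\ref{Konig-Milman}.

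For the converse direction, I would simply observe that for arbitrary $b,c\in\mathscr{C}(\Omega,\mathbb{R}^{N})$ the operators defined by \eqref{TA} are additive, each summand being linear in $f$, in $f'$, or in $f''$; and that they satisfy the second-order Leibniz rule \eqref{Tfg} by the converse part of Theorem~\ref{Konig-Milman} applied with $a\equiv 0$.

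I do not expect a genuine obstacle here: the entire content lies in separating off the non-additive term and probing it with one convenient pair of functions. The only point requiring a line of care is the legitimacy of substituting constants (they are $\mathscr{C}^{k}$ with vanishing derivatives, so every other term in the representation drops out) together with the choice of normalization, for which $1$ is convenient because $\ln 1=0$ while $2\ln 2\neq 0$; any $s=t$ with $2s\neq 1$ would serve equally well, and one may restrict from the outset to positive constants to sidestep the absolute value inside the logarithm.
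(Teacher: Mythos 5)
Your proposal is correct, and it supplies exactly the argument the paper leaves implicit: the corollary is stated there without proof, the intended point being that additivity of $T$ forces the only non-additive summand $f\mapsto a(\cdot)f\ln(|f|)$ in the K\"{o}nig--Milman representation to be additive, which fails unless $a\equiv 0$. Your test with the constant functions $f=g=1$ (giving $2a(x_{0})\ln 2=0$) settles this cleanly; the only cosmetic remark is that your caveat ``$2s\neq 1$'' is unnecessary, since for any constant $s>0$ the additivity test yields $2a(x)s\ln 2=0$ and hence $a(x)=0$ regardless.
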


The assumption that $A$ is non-degenerate and depends non-trivially on the derivative, is essential in Theorem \ref{Konig-Milman} and also in Corollary \ref{Cor_KM}. Our example below is a slight modification of the example on page 114 of \cite{KonMil18}. 
To see the necessity of the above-mentioned conditions, suppose that $\Omega =\mathbb{R}^{N}$ and let $h\in \mathbb{R}^{N}$ be an arbitrary nonzero vector. Consider the difference operators 
$T, A\colon \mathscr{C}^{k}(\Omega, \mathbb{R})\to \mathscr{C}(\Omega, \mathbb{R})$  defined by 
$T f= \Delta_{h}f$ and
$A f= -\frac{\sqrt{2}}{2}\Delta_{h}f$
for all $f\in \mathscr{C}^{k}(\Omega, \mathbb{R})$. 
Then we have 
\begin{align*}
 T(f\cdot g)(x)
 & = f(x+h)g(x+h)-f(x)g(x)\\
& =  f(x)g(x+h)-f(x)g(x)+f(x+h)g(x)-f(x)g(x)\\
& +f(x)g(x)-g(x+h)f(x)-f(x+h)g(x)+g(x+h)f(x+h) \\
& =f(x)\cdot \left[g(x+h)-g(x)\right]+\left[f(x+h)-f(x)\right]\cdot g(x) \\
& + 2 \frac{\sqrt{2}}{2} \left[f(x)-f(x+h)\right] \cdot \frac{\sqrt{2}}{2} \left[g(x)-g(x+h)\right]  \\
& = f(x)\cdot T(g)(x)+T(f)(x) \cdot g(x)+2 A(f)(x)\cdot A(g)(x)
\end{align*}
for all $f, g\in \mathscr{C}^{k}(\Omega, \mathbb{R})$ and $x \in \Omega$. At the same time, the operators $T$ and $A$ are obviously not of the form as stated in Theorem \ref{Konig-Milman}. This is caused by the fact that in this example $A$ is not non-degenerate (and neither does $T$).  {This follows from a more general example of \cite[Chapter 8, pages 141-142]{KonMil18}. }

\section{Results}\label{S2}

In this section, we first examine equations \eqref{Eq1}, \eqref{Eq2} and \eqref{bullet} from a purely algebraic point of view. 
The main point of Lemma \ref{Lem2} below is that equations \eqref{Eq2} and \eqref{bullet} are separately equivalent to equation \eqref{Eq1} for additive operators. Next, we prove some extension theorems. Finally, by using these findings and the results of K\"{o}nig--Milman \cite{KonMil18}, we obtain characterization theorems for first- and second-order differential operators.

\begin{lem}\label{Lem2}
Let $n\geq 2$ be a positive integer, $Q$ and $R$ be commutative rings such that $\mathrm{char}(Q)>n!$, and $P$ be a subring of $Q$. Let further $T \colon P\to Q$ and $A\colon P\to R$ be additive mappings for which  {equation \eqref{bullet} holds }
for all $f\in P$ with a symmetric and bi-additive mapping $B\colon R\times R\to Q$. If  {$n=2$ or} $A(1)=0$, then equation \eqref{Eq1} is satisfied
for all $f, g\in P$. 
\end{lem}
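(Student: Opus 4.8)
The plan is to start from equation \eqref{bullet} and recover equation \eqref{Eq1} by a polarization-type argument, exploiting that the maps $T$ and $A$ are additive and that $B$ is symmetric and bi-additive. First I would observe that since $T$ is additive, for fixed $f, g\in P$ the map $(s,t)\mapsto T((sf+tg)^n)$ expanded by the binomial theorem (valid because $P$ is a commutative ring) is a ``polynomial'' in the integer arguments $s,t$; applying \eqref{bullet} to the element $h=f+g$ (and more generally to $f+g+\dots$) should let me isolate the mixed term. Concretely, I would compute $T((f+g)^2)$ using \eqref{bullet} with $n=2$, namely $T((f+g)^2)=2(f+g)T(f+g)+2B(A(f+g),A(f+g))$, expand the left side as $T(f^2)+2T(fg)+T(g^2)$ using additivity of $T$ together with $(f+g)^2=f^2+fg+gf+g^2=f^2+2fg+g^2$, and expand the right side using additivity of $T$ and $A$ and bi-additivity and symmetry of $B$. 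The terms $T(f^2), T(g^2)$ are then eliminated via \eqref{Eq2} (which is \eqref{bullet} with $n=2$), leaving precisely $2T(fg)=fT(g)+T(f)g+2B(A(f),A(g))$ after cancelling a factor; here is where $\mathrm{char}(Q)>n!\geq 2$ is needed, to divide by $2$.

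This settles the case $n=2$ directly. For general $n$ with the extra hypothesis $A(1)=0$, the idea is to first reduce to the case $n=2$ by showing \eqref{bullet} for the exponent $n$ forces \eqref{Eq2}, i.e. \eqref{bullet} for the exponent $2$. I would try substituting $f=1$ (the identity of $Q$, which lies in the subring $P$) into \eqref{bullet}: since $1^n=1$ we get $T(1)=nT(1)+nB(A(1),A(1))=nT(1)+0$ using $A(1)=0$, hence $(n-1)T(1)=0$ and so $T(1)=0$ because $\mathrm{char}(Q)>n!\geq n-1$. Next, replacing $f$ by $f+1$ in \eqref{bullet} and using $(f+1)^n=\sum_{j=0}^{n}\binom{n}{j}f^j$, additivity of $T$, together with $T(1)=0$, $A(1)=0$, and the already-known relations $T(f^j)=jf^{j-1}T(f)+jB(A(f),A(f^{j-1}))$ for the various $j$, I expect a system of identities whose lowest-order consequence (the coefficient of the linear-in-$f$ part, i.e. $\binom{n}{2}$ times the quadratic relation minus correction terms) yields \eqref{Eq2}. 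Once \eqref{Eq2} holds, the $n=2$ argument of the previous paragraph gives \eqref{Eq1}.

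The main obstacle I anticipate is the bookkeeping in the ``$f\mapsto f+1$'' substitution for general $n$: one must track the terms $B(A(f^i),A(f^j))$ for mixed powers $i+j<n$, and it is not a priori obvious that these combine to give exactly the quadratic relation without additional hypotheses beyond $A(1)=0$ and the characteristic bound. A cleaner route, which I would pursue if the direct expansion is unwieldy, is an induction on $n$: assume \eqref{Eq1} (equivalently \eqref{bullet} for all smaller exponents) is known to follow, and use the identity $f^n=f\cdot f^{n-1}$ together with \eqref{Eq1} applied to the pair $(f, f^{n-1})$ — but this is circular unless one has a starting point, so the genuine work is precisely extracting \eqref{Eq2} from \eqref{bullet} for a single exponent $n$. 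I would therefore focus effort on the substitution $f\mapsto f+t\cdot 1$ with an integer parameter $t$, collect the coefficient of $t^{n-2}$ (or $t^2$, whichever is cleaner), and check that $A(1)=0$ is exactly what kills the obstructing cross terms; the characteristic assumption $\mathrm{char}(Q)>n!$ then licenses dividing by the binomial coefficients $\binom{n}{j}\leq n!$ that appear.
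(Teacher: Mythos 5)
Your proposal is correct, but it is organized differently from the paper's argument. The paper defines the symmetric $n$-additive map $\mathcal{A}_{n}(f_{1},\ldots,f_{n})$ whose diagonalization is exactly the defect of \eqref{bullet}, invokes the polarization formula (Corollary \ref{mainfact}, using $\mathrm{char}(Q)>n!$) to conclude $\mathcal{A}_{n}\equiv 0$, and then substitutes $(f,g,1,\ldots,1)$ to obtain \eqref{Eq1} in one step; for $n=2$ the vanishing of $\mathcal{A}_{2}$ is already \eqref{Eq1}. You instead polarize by hand: your $n=2$ computation with $T((f+g)^{2})$ is precisely what the abstract argument amounts to, and for $n>2$ you first get $T(1)=0$ from $f=1$ and then shift $f\mapsto f+t\cdot 1$ with an integer parameter $t$. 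This reduction does work, and the bookkeeping you worry about dissolves: since $A(1)=0$ and $T(1)=0$ give $A(f+t)=A(f)$ and $T(f+t)=T(f)$, the right-hand side of \eqref{bullet} at $f+t$ equals $n\sum_{i}\binom{n-1}{i}t^{\,n-1-i}\bigl[f^{i}T(f)+B(A(f),A(f^{i}))\bigr]$, so no mixed terms $B(A(f^{i}),A(f^{j}))$ with $i\geq 2$ ever appear; comparing coefficients of $t^{\,n-j}$ (legitimate via finite differences in $t$, since division by $k!$ for $k\leq n$ and by $\binom{n}{j}$ is covered by $\mathrm{char}(Q)>n!$) yields $T(f^{j})=jf^{j-1}T(f)+jB(A(f),A(f^{j-1}))$ for every $2\leq j\leq n$, and $j=2$ is \eqref{Eq2}, after which your two-variable polarization gives \eqref{Eq1}. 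Your intermediate remark about using the ``already-known relations $T(f^{j})=\cdots$'' should be dropped --- those are not hypotheses but outputs of the coefficient comparison. What the paper's route buys is brevity and reusability: the map $\mathcal{A}_{n}$ and the generalized-polynomial machinery transfer verbatim to Corollaries \ref{Cor2} and \ref{Cor2k}, where \eqref{bullet} is only assumed on an open or second-category set; your route is more elementary, avoids the multi-additive formalism, and yields \eqref{bullet} for all exponents $j\leq n$ as a by-product, at the cost of explicitly using $1\in P$ and the shift by integer multiples of $1$ in the case $n>2$ (which the paper's substitution of $1$'s uses as well).
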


\begin{proof}
 {Under the assumptions of the lemma, let us consider }
the mapping $\mathcal{A}_{n}\colon P^{n}\to Q$ defined through 
\begin{multline}\label{Eq_lemma1}
 \mathcal{A}_{n}(f_{1}, \ldots, f_{n})
 \\
 = T(f_{1}\cdots f_{n})- 
 \left[ 
 f_{2}\cdots f_{n}T(f_{1})+ f_{1}f_{3} \cdots f_{n}T(f_{2})+ \cdots + f_{1}\cdots f_{n-1}T(f_{n}) \right]
 \\
  -
 \left[\sum_{i=1}^{n}B(A(f_{i}), A(f_{1}\cdots f_{i-1}\cdot f_{i+1}\cdots f_{n}))\right]
 \\
 \left(f_{1}, \ldots, f_{n}\in P\right). 
\end{multline}
Due to the additivity of the operators $T$ and $A$ and the bi-additivity of $B$, the mapping $\mathcal{A}_{n}$ is symmetric and $n$-additive. Next, by equation \eqref{bullet} its trace is identically zero.
 {Since $\mathrm{char}(Q)>n!$, the multiplication by $n!$ is injective on $Q$. Thus Corollary \ref{mainfact} implies that $\mathcal{A}_{n}$ is identically zero}. 
 {If $n=2$, then we arrive at
$$
 0 = \mathcal{A}_{2}(f_{1}, f_{2}) = T(f_1\cdot f_2) - f_1T(f_2) -  f_2T(f_1)  {-2B(A(f_{1}), A(f_{2}))},
$$
for all $f_1, f_2 \in P$, which completes the proof in this case.
In case of arbitrary $n > 2$ take}
$f\in P$ and substitute  {$f_{1}=f$ and }$f_{i}=1$ for $i=2, \ldots, n$  {in \eqref{Eq_lemma1}}, 
\[
 T(f)= T(f)+(n-1)fT(1)+nB(A(f), A(1)), 
\]
i.e. 
\[
 (n-1)T(1)f+nB(A(f), A(1))=0.
\] 
Since $A(1)=0$ and due to the bi-additivity of $B$, we obtain 
\[
 B(A(f), A(1))= B(A(f), 0)=0.
\]
From this $T(1)=0$ follows. 
Again, since the mapping $\mathcal{A}_{n}$ is identically zero, if $f, g\in P$ and we substitute  {$f_{1}=f, f_{2}=g$ and} $f_{i}=1$ for $i=3, \ldots, n$  {in \eqref{Eq_lemma1}}, we get that 
\begin{multline*}
 T(fg)= fT(g)+gT(f)+(n-2)fgT(1)+\left[2B(A(f), A(g))+(n-2)B(A(fg), A(1))\right]
 \\
 =
 fT(g)+gT(f) +2B(A(f), A(g))
\end{multline*}
is fulfilled for all $f, g\in P$. 
\end{proof}

As a consequence of this lemma   {and its proof}, now we show that if the rings $Q$ and $R$ are topological rings, then if equation \eqref{bullet} is fulfilled only on a subset $U\subset P$ with a nonempty interior (however mappings $T$ and $A$ are defined on the whole ring $P$), then this also implies that equation \eqref{Eq1} is satisfied everywhere on $P$.

\begin{cor}\label{Cor2}
Let $n\geq 2$ be a positive integer, $Q$ and $R$ be commutative topological rings such that $\mathrm{char}(Q)>n!$, let further $P$ be a subring of $Q$ and $U\subset P$ be a nonvoid open set. Let further $T \colon P\to Q$ and $A\colon P\to R$ be additive mappings for which equation \eqref{bullet}
holds for all $f\in U$ with a symmetric and bi-additive mapping $B\colon R\times R\to Q$. If   {$n=2$ or} $A(1)=0$, then equation \eqref{Eq1} is satisfied
for all $f, g\in P$. 
\end{cor}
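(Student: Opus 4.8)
The plan is to run the proof of Lemma~\ref{Lem2} again, the only genuinely new ingredient being a rigidity argument for generalized polynomials that upgrades ``vanishing on $U$'' to ``vanishing on all of $P$''. So, exactly as in that proof, I would first introduce the map $\mathcal{A}_{n}\colon P^{n}\to Q$ defined by \eqref{Eq_lemma1}. Additivity of $T$ and $A$ together with the bi-additivity of $B$ make $\mathcal{A}_{n}$ symmetric and $n$-additive, and its diagonalization satisfies
$$\mathcal{A}_{n}^{\ast}(f)=T(f^{n})-nf^{n-1}T(f)-nB(A(f),A(f^{n-1})) \qquad (f\in P),$$
so the hypothesis that \eqref{bullet} holds on $U$ says precisely that $\mathcal{A}_{n}^{\ast}$ vanishes on $U$.

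The crux, and the place where the newly added topological assumptions are used, is the next step: $\mathcal{A}_{n}^{\ast}$ is a generalized monomial of degree $n$ on the additively written topological group $P$ with values in the topological group $Q$, hence a generalized polynomial; since it vanishes on the nonempty open set $U$, the cited result of Sz\'{e}kelyhidi on generalized polynomials vanishing on an open set (see also Corollary~\ref{kat}) forces $\mathcal{A}_{n}^{\ast}\equiv 0$ on all of $P$. I expect this to be the main obstacle to state carefully: $\mathcal{A}_{n}^{\ast}$ is not assumed continuous a priori, so one has to be sure the rigidity theorem genuinely applies, which is exactly why $Q$ and $R$ are now required to be topological rings and $U$ is required to have nonempty interior rather than being some other ``large'' subset of $P$.

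Once $\mathcal{A}_{n}^{\ast}\equiv 0$ on $P$ is known, the assumption $\mathrm{char}(Q)>n!$ makes multiplication by $n!$ injective on $Q$, so Corollary~\ref{mainfact} gives $\mathcal{A}_{n}\equiv 0$ on $P^{n}$. From here the argument is verbatim the end of the proof of Lemma~\ref{Lem2}: if $n=2$ then $\mathcal{A}_{2}\equiv 0$ is literally \eqref{Eq1}; if $n>2$, substituting $f_{1}=f$ and $f_{i}=1$ for $i\ge 2$ into \eqref{Eq_lemma1} and using $A(1)=0$ (whence $B(A(f),A(1))=0$) yields $T(1)=0$, and then substituting $f_{1}=f$, $f_{2}=g$ and $f_{i}=1$ for $i\ge 3$ gives $T(fg)=fT(g)+gT(f)+2B(A(f),A(g))$ for all $f,g\in P$, i.e.\ \eqref{Eq1} on all of $P$. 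It is worth emphasizing that these substitutions involve the unit $1$, which need not lie in $U$; this is why the globalization step really is needed and the conclusion cannot be reached by working inside $U$ alone.
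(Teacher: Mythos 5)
Your proposal is correct and follows essentially the same route as the paper: form $\mathcal{A}_{n}$ as in Lemma \ref{Lem2}, note its diagonalization vanishes on the open set $U$, invoke Sz\'{e}kelyhidi's rigidity result to get vanishing on all of $P$, use $\mathrm{char}(Q)>n!$ with Corollary \ref{mainfact} to conclude $\mathcal{A}_{n}\equiv 0$, and finish with the same substitutions. Your added remarks (why the topological hypotheses are needed and why the unit substitutions force the globalization step) are accurate but do not change the argument.
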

\begin{proof}
 {Under the assumptions of the corollary, 
consider the mapping $\mathcal{A}_n\colon P\times P\to Q$ defined as in the proof of Lemma \ref{Lem2}.
Repeating the calculations from the proof of Lemma \ref{Lem2} we get that the diagonalization of $\mathcal{A}_n$ vanishes on the open subset $U$ of $P$. In view of Sz\'{e}kelyhidi \cite{Sze85} this is possible only if this map is identically zero on $P$, yielding in turn that $\mathcal{A}_n$ is identically zero on $P\times P$. So, repeating the respective parts of the proof of Lemma \ref{Lem2}, we derive that \eqref{Eq1} is satisfied for all $f, g\in P$. }
\end{proof}

We can join this corollary with Corollary \ref{kat} to relax the assumptions upon the set of zeros if additionally $P$ and $R$ are linear-topological spaces. Therefore, we assume that $P$ and $R$ are topological algebras over $\R$.  {Note that any real algebra trivially satisfies the assumptions of injectivity and surjectivity of scalar multiplication.}

\begin{cor}\label{Cor2k}
Let $Q$ and $R$ be topological algebras over $\R$, $Q$ being partially ordered. Further, 
$P$ be a subring of $Q$ and $U\subset P$ be a a set of the second category in $Q$ with the Baire property.
Let further $T \colon P\to Q$ and $A\colon P\to R$ be additive mappings for which  {equation \eqref{bullet}}
holds for all $f\in U$ with a symmetric and bi-additive mapping $B\colon R\times R\to Q$. If  {$n=2$ or} $A(1)=0$, then equation \eqref{Eq1} is satisfied
for all $f, g\in P$. 
\end{cor}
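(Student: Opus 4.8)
The plan is to mirror the proof of Corollary \ref{Cor2}, replacing the appeal to Sz\'{e}kelyhidi's result on vanishing of generalized polynomials on open sets by the sharper Corollary \ref{kat}. Concretely, under the stated hypotheses we form the same auxiliary mapping $\mathcal{A}_{n}\colon P^{n}\to Q$ defined exactly as in \eqref{Eq_lemma1} in the proof of Lemma \ref{Lem2}. By additivity of $T$ and $A$ together with bi-additivity of $B$, this $\mathcal{A}_{n}$ is symmetric and $n$-additive, and its diagonalization $\mathcal{A}_{n}^{\ast}$ is a generalized polynomial on $P$ (in fact a generalized monomial of degree $n$) with values in $Q$.

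Next I would observe that, by hypothesis, equation \eqref{bullet} holds for every $f\in U$; repeating the computation from the proof of Lemma \ref{Lem2} verbatim shows that $\mathcal{A}_{n}^{\ast}$ vanishes on the set $U$. Since $U$ is of the second category in $Q$ with the Baire property, and $P$ and $Q$ are real topological algebras (so $P$ is in particular a linear-topological space and $Q$ is partially ordered), Corollary \ref{kat} applies: the generalized polynomial $\mathcal{A}_{n}^{\ast}$ must then vanish identically on $P$. Here one should note, as the remark preceding the corollary points out, that any real algebra automatically satisfies the injectivity/surjectivity requirements on multiplication by integers, so $\mathrm{char}(Q)>n!$ is not even needed for the final descent — but to invoke Corollary \ref{mainfact} one still wants $\mathcal{A}_{n}^{\ast}\equiv 0\Rightarrow \mathcal{A}_{n}\equiv 0$, which holds because multiplication by $n!$ is injective on the $\R$-algebra $Q$.

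Once $\mathcal{A}_{n}\equiv 0$ on all of $P^{n}$, the conclusion follows by exactly the substitution arguments already carried out in Lemma \ref{Lem2}: if $n=2$ one directly reads off \eqref{Eq1} from $\mathcal{A}_{2}(f,g)=0$; if $n>2$ one uses $A(1)=0$ to derive first $T(1)=0$ (substituting $f_{1}=f$, $f_{i}=1$ for $i\ge 2$, and using bi-additivity of $B$ so that $B(A(f),A(1))=0$), and then substituting $f_{1}=f$, $f_{2}=g$, $f_{i}=1$ for $i\ge 3$ to obtain $T(fg)=fT(g)+gT(f)+2B(A(f),A(g))$ for all $f,g\in P$.

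I do not expect a genuine obstacle here: the statement is essentially a packaging of Corollary \ref{Cor2} with Corollary \ref{kat} substituted for the open-set vanishing theorem, and the only point requiring a little care is checking that the hypotheses of Corollary \ref{kat} are met — namely that $\mathcal{A}_{n}^{\ast}$ really is a $Q$-valued generalized polynomial on the linear-topological space $P$ and that $Q$ carries the partial order needed to state $n$-convexity. The mildly delicate bookkeeping is simply that the set $U$ on which \eqref{bullet} is assumed must be second category \emph{in $Q$} (not merely in $P$), which is why the corollary is phrased with $U\subset P$ a set of the second category in $Q$ with the Baire property; everything else is a routine transcription of the earlier arguments.
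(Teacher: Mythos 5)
Your proposal is correct and follows exactly the route the paper intends: the paper leaves Corollary~\ref{Cor2k} without a separate proof, relying on the preceding remark that one simply repeats the argument of Corollary~\ref{Cor2} (the auxiliary map $\mathcal{A}_n$ from Lemma~\ref{Lem2}, whose diagonalization vanishes on $U$) and replaces the open-set vanishing theorem of Sz\'{e}kelyhidi by Corollary~\ref{kat}, with the real-algebra structure guaranteeing the divisibility hypotheses of Corollary~\ref{mainfact}. Your write-up, including the final substitutions distinguishing $n=2$ from $n>2$, is exactly this argument.
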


Note that the technique described above is suitable also for characterizing first-order differential operators. Indeed, from Lemma \ref{Lem2} we immediately deduce with the choice $B\equiv 0$ the following statement:

  \emph{Let $Q$ be a commutative ring such that $\mathrm{char}(Q)>2$ and $P$ be a subring of $Q$. Let further $T \colon P\to Q$ be an additive mapping for which we have 
\begin{equation}\label{Eq_spec2}
T(f^{n})=nf^{n-1}\cdot T(f)
\end{equation}
for all $f\in P$ with a symmetric and bi-additive mapping $B\colon R\times R\to Q$. 
Then equation the mapping $T$ also fulfils 
\[
 T(f\cdot g)= f\cdot T(g)+T(f)\cdot g
\]
for all $f, g\in P$.}

 Further, if we combine this statement with \cite[Theorem 3.5]{KonMil18} the obtain the following characterization of first-order differential operators. In fact, the following proposition was also proved in \cite{FecSwi22} with a different method. 

\begin{prop}
 Let $N\in \mathbb{N}$, $k$ be a nonnegative integer and $\Omega \subset \mathbb{R}^{N}$ be a nonempty open set. If the additive operator $T\colon \mathscr{C}^{k}(\Omega, \mathbb{R})\to \mathscr{C}(\Omega,  \mathbb{R})$ satisfies  {\eqref{Eq_spec2}}
 for all $f\in \mathscr{C}^{k}(\Omega)$, then there exists a continuous function $b\in \mathscr{C}(\Omega, \mathbb{R}^{N})$ such that 
\[\label{Tb}
 T(f)(x)= \langle b(x), f'(x) \rangle 
 \qquad 
 \left(f\in \mathscr{C}^{k}(\Omega, \mathbb{R})\right). 
\]
For $k=0$, the function $b$ is necessarily identically zero. 
\end{prop}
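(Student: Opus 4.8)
The plan is to derive this Proposition as a direct combination of the specialized version of Lemma \ref{Lem2} (with $B\equiv 0$) stated immediately above it and the cited result \cite[Theorem 3.5]{KonMil18} of K\"{o}nig--Milman. First I would observe that the hypothesis \eqref{Eq_spec2} is exactly condition \eqref{bullet} with the bi-additive map $B$ taken to be identically zero. Since $\mathrm{char}(\mathscr{C}(\Omega,\mathbb{R}))=0>2$ and $\mathscr{C}^{k}(\Omega,\mathbb{R})$ is a subring of $\mathscr{C}(\Omega,\mathbb{R})$, the additive operator $T$ falls under the scope of the italicized consequence of Lemma \ref{Lem2}, so $T$ satisfies the first-order Leibniz rule
\[
 T(f\cdot g)= f\cdot T(g)+T(f)\cdot g
 \qquad
 \left(f, g\in \mathscr{C}^{k}(\Omega,\mathbb{R})\right).
\]

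Next I would invoke \cite[Theorem 3.5]{KonMil18}, which classifies operators on $\mathscr{C}^{k}(\Omega,\mathbb{R})$ satisfying the Leibniz rule: every such operator has the form $T(f)(x)=\langle b(x), f'(x)\rangle + a(x) f(x)\ln|f(x)|$ for continuous functions $b\in\mathscr{C}(\Omega,\mathbb{R}^{N})$ and $a\in\mathscr{C}(\Omega,\mathbb{R})$ (with $b\equiv 0$ forced when $k=0$). The only remaining point is to eliminate the logarithmic term, and here I would use additivity of $T$ exactly as in the passage from Theorem \ref{Konig-Milman} to Corollary \ref{Cor_KM}: the map $f\mapsto a(x)f(x)\ln|f(x)|$ is not additive unless $a(x)=0$. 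Concretely, evaluating $T(f+g)=T(f)+T(g)$ on suitable constant or nowhere-vanishing functions and comparing the logarithmic contributions forces $a\equiv 0$, leaving $T(f)(x)=\langle b(x), f'(x)\rangle$. For $k=0$ the same theorem gives $b\equiv 0$, which is the final assertion.

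I do not expect any genuine obstacle here, since every ingredient is already available: the algebraic reduction is the specialized Lemma \ref{Lem2}, and the structural classification together with the additivity argument killing the $\ln$-term are precisely parallel to Corollary \ref{Cor_KM}. The only mild care needed is to confirm that \cite[Theorem 3.5]{KonMil18} applies on a general nonempty open set $\Omega$ (not necessarily connected); if the cited theorem is stated for domains, one argues componentwise on the connected components of $\Omega$ and patches the resulting continuous functions $b$ together, which is immediate. Hence the proof is essentially a two-line citation chain followed by the standard removal of the logarithmic term via additivity.
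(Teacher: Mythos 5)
Your proposal is correct and follows essentially the same route as the paper: the paper derives the first-order Leibniz rule from \eqref{Eq_spec2} via the specialized ($B\equiv 0$) consequence of Lemma \ref{Lem2} and then cites \cite[Theorem 3.5]{KonMil18}, with additivity of $T$ eliminating the entropy term $a(x)f(x)\ln|f(x)|$ exactly as in the passage from Theorem \ref{Konig-Milman} to Corollary \ref{Cor_KM}. Your extra remarks (characteristic-zero check, componentwise argument for non-connected $\Omega$) are sound details that the paper leaves implicit.
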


In what follows, we will apply the above results by choosing $P, Q$ and $R$ to be a given function space, thus obtaining characterization theorems for second-order differential operators.

\begin{thm}
  Let $N$ be a positive integer, $n\geq 2$ be a positive integer, $k$ be a nonnegative integer and $\Omega \subset\mathbb{R}^{N}$ be a domain. Assume that the additive operators $T, A\colon \mathscr{C}^{k}(\Omega, \mathbb{R})\to \mathscr{C}(\Omega, \mathbb{R})$ satisfy 
 \begin{equation}\label{Eq6}
  T(f^{n})= nf^{n-1}T(f)+n A(f)A(f^{n-1})
  \qquad 
  \left(f\in \mathscr{C}^{k}(\Omega, \mathbb{R})\right)
 \end{equation}
and that $A$ is non-degenerate, depends non-trivially on the derivative and annihilates the identically $1$ function. 
If  {$n=2$ or $A(1)=0$}, then there are continuous functions $b, c\in \mathscr{C}(\Omega, \mathbb{R}^{N})$ such that formulas \eqref{TA} are satisfied.
Further, if $k=1$, then necessarily $c\equiv 0$. Further, if $k=0$, then necessarily $b\equiv 0$ and $c\equiv 0$. 

Conversely, these operators are additive and also satisfy equation \eqref{Eq6}.
\end{thm}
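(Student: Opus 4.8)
The plan is to derive the direct implication from Lemma \ref{Lem2} followed by Corollary \ref{Cor_KM}, and to obtain the converse by a short computation. First I would cast the problem in the algebraic language of Section \ref{S2}: take $P = \mathscr{C}^{k}(\Omega, \mathbb{R})$, $Q = R = \mathscr{C}(\Omega, \mathbb{R})$ and let $B\colon R\times R\to Q$ be the pointwise product $B(u, v) = u\cdot v$, which is symmetric and bi-additive. Since $Q = \mathscr{C}(\Omega, \mathbb{R})$ has characteristic $0$, the hypothesis $\mathrm{char}(Q) > n!$ of Lemma \ref{Lem2} is automatic, and with this $B$ equation \eqref{Eq6} becomes exactly \eqref{bullet}. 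The operators $T, A$ are additive by assumption, and $n = 2$ or $A(1) = 0$ holds (indeed $A$ annihilates the identically $1$ function), so Lemma \ref{Lem2} applies and gives \eqref{Eq1}, which in this setting reads
\[
 T(f\cdot g) = f\,T(g) + T(f)\,g + 2A(f)\,A(g) \qquad \left(f, g \in \mathscr{C}^{k}(\Omega, \mathbb{R})\right);
\]
this is precisely equation \eqref{Tfg}.

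Now the hypotheses of Corollary \ref{Cor_KM} are met: $T$ and $A$ are additive, satisfy \eqref{Tfg}, and $A$ is non-degenerate and depends non-trivially on the derivative. Hence there exist $b, c \in \mathscr{C}(\Omega, \mathbb{R}^{N})$ for which formulas \eqref{TA} hold, with $c \equiv 0$ if $k = 1$ and $b \equiv c \equiv 0$ if $k = 0$. This proves the direct part.

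For the converse, assume $T$ and $A$ are of the form \eqref{TA}. Additivity is immediate from the linearity of $f \mapsto f'$ and $f \mapsto f''$, and the converse direction of Corollary \ref{Cor_KM} gives \eqref{Tfg}. Putting $g = f$ in \eqref{Tfg} yields \eqref{Eq2}, namely $T(f^{2}) = 2fT(f) + 2A(f)^{2}$, and then \eqref{Eq6} follows for all $n \geq 2$ by induction: applying \eqref{Tfg} to the pair $(f, f^{n-1})$, invoking the induction hypothesis, and using the elementary identity $A(f^{m}) = m f^{m-1} A(f)$ — which is read off from $A(f)(x) = \langle f'(x), c(x)\rangle$ through the chain rule — one collects the terms into the desired form. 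Alternatively, \eqref{Eq6} can be checked by differentiating $f^{n}$ directly in \eqref{TA}. No genuine difficulty arises here: the substantive work, both the passage from the one-variable identity \eqref{bullet} to the full second-order Leibniz rule and the classification of its solutions, is already carried by Lemma \ref{Lem2} and the K\"{o}nig--Milman theorem; the only points needing attention are verifying that the characteristic and bi-additivity conditions of Lemma \ref{Lem2} genuinely hold in the function-space instance (they do, because $\mathrm{char}(Q) = 0$) and carrying out the routine induction in the converse.
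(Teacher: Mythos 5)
Your proposal is correct and follows essentially the same route as the paper: specialize to $P=\mathscr{C}^{k}(\Omega,\mathbb{R})$, $Q=R=\mathscr{C}(\Omega,\mathbb{R})$ with $B(u,v)=u\cdot v$, invoke Lemma \ref{Lem2} to pass from \eqref{Eq6} to the second-order Leibniz rule \eqref{Tfg}, and then apply the K\"onig--Milman result to obtain \eqref{TA}, the converse being a routine verification. Your citing Corollary \ref{Cor_KM} (rather than Theorem \ref{Konig-Milman}, as the paper does) is if anything slightly more precise, since additivity is what eliminates the $a f\ln|f|$ term.
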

\begin{proof}
 Consider the rings $P= \mathscr{C}^{k}(\Omega, \mathbb{R})$, $Q= \mathscr{C}(\Omega, \mathbb{R})$ and $R= \mathscr{C}(\Omega, \mathbb{R})$ with the pointwise addition and multiplication of functions and also the symmetric and bi-additive mapping $B\colon \mathscr{C}(\Omega, \mathbb{R})\times \mathscr{C}(\Omega, \mathbb{R})\to \mathscr{C}(\Omega, \mathbb{R})$ defined through 
 \[
  B(f, g)= f\cdot g 
  \qquad 
  \left(f, g\in \mathscr{C}(\Omega, \mathbb{R})\right). 
 \]
Then the assumptions of Lemma \ref{Lem2} are fulfilled. 
This yields that the equality \eqref{Tfg} holds for all $f, g\in \mathscr{C}^{k}(\Omega, \mathbb{R})$. 
In view of Theorem \ref{Konig-Milman}, this implies that there are continuous functions $b, c\in \mathscr{C}(\Omega, \mathbb{R}^{N})$ such that formulas \eqref{TA} are fulfilled. The converse implication is a routine calculation.
\end{proof}

 {In our last result we will deal with a conditional differential relation that is meaningful for  {nowhere} vanishing mappings. In the proof first we will show that formulas \eqref{TA} are valid for mappings $f$ such that $f^2 - 1$ is  {nowhere} zero. Then we will apply Corollary \ref{Cor2} to extend \eqref{TA} to each $f\in \mathscr{C}^{k}(\Omega, \mathbb{R})$. }

\begin{thm}
 Let $N$ be a positive integer, $k$ be a nonnegative integer and $\Omega \subset \mathbb{R}^{N}$ be a domain. Let $T, A\colon \mathscr{C}^{k}(\Omega, \mathbb{R})\to \mathscr{C}(\Omega, \mathbb{R})$ be  additive mappings such that $A(1)=0$ and 
 \begin{equation}\label{Eq7}
   T\left(\frac{1}{f}\right)= -\frac{1}{f^{2}}T(f)+\frac{2}{f^{3}}A(f)^{2}
 \end{equation}
 holds  for all $f\in \mathscr{C}^{k}(\Omega, \mathbb{R})$ for which $f(x)\neq 0$ for all $x\in \Omega$. 
Then there are continuous functions $b, c\in \mathscr{C}(\Omega, \mathbb{R}^{N})$ such that formulas \eqref{TA} hold true.
Further, if $k=1$, then necessarily $c\equiv 0$. Further, if $k=0$, then necessarily $b\equiv 0$ and $c\equiv 0$. 
\end{thm}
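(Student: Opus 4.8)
The plan is to deduce from the conditional relation \eqref{Eq7} the unconditional quadratic identity \eqref{Eq2}, then to invoke the propagation result Corollary \ref{Cor2} and, finally, the additive form of the K\"{o}nig--Milman theorem (Corollary \ref{Cor_KM}). A quick first observation is that $T(1)=0$: the constant function $1$ is nowhere zero, so \eqref{Eq7} applied to it reads $T(1)=-T(1)+2A(1)^{2}=-T(1)$ since $A(1)=0$ and $\mathscr{C}(\Omega,\mathbb{R})$ has characteristic $0$. By additivity of $T$ and $A$ it follows that $T(f\pm 1)=T(f)$, $A(f\pm 1)=A(f)$, $T(f^{2}-1)=T(f^{2})$ and $A(f^{2}-1)=A(f^{2})$ for every $f\in\mathscr{C}^{k}(\Omega,\mathbb{R})$.

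The core algebraic step is to obtain \eqref{Eq2} on a rich class of functions. If $f\in\mathscr{C}^{k}(\Omega,\mathbb{R})$ has all its values away from $1$ and from $-1$, then $f-1$, $f+1$ and $f^{2}-1=(f-1)(f+1)$ are nowhere zero, so \eqref{Eq7} applies to each of them; inserting these three instances into the partial-fraction identity $\tfrac{1}{f^{2}-1}=\tfrac12\bigl(\tfrac{1}{f-1}-\tfrac{1}{f+1}\bigr)$ and using additivity, hence $\mathbb{Q}$-homogeneity, of $T$ and $A$, one obtains, after clearing the denominator $(f^{2}-1)^{3}$, a single polynomial identity linking $T(f^{2})$, $A(f^{2})$, $T(f)$, $A(f)$ and $f$. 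This identity alone does not isolate $T(f^{2})$; to create more equations I would replace $f$ by $f+s$ with $s$ a rational number small enough that $f+s$ still avoids $\pm 1$ — which is possible for all $s$ in a whole interval exactly when $f$ is bounded away from $1$ and from $-1$ — note that both sides then become polynomials in $s$, and compare coefficients. The coefficient of $s^{2}$ vanishes precisely when $T(f^{2})=2fT(f)+2A(f)^{2}$, which is \eqref{Eq2} for~$f$; the lower-order coefficients add the bonus relation $A(f^{2})=2fA(f)$. Scaling $f$ by small rationals and again invoking $\mathbb{Q}$-homogeneity extends \eqref{Eq2}, equivalently \eqref{bullet} with $n=2$, to every bounded function, so \eqref{bullet} with $n=2$ holds on a suitable subring of $\mathscr{C}^{k}(\Omega,\mathbb{R})$.

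Once \eqref{bullet} with $n=2$ is available there, Corollary \ref{Cor2} (with $Q=R=\mathscr{C}(\Omega,\mathbb{R})$, $B(u,v)=uv$ and $n=2$) upgrades it to the full second-order Leibniz rule \eqref{Tfg} for all $f,g\in\mathscr{C}^{k}(\Omega,\mathbb{R})$. Corollary \ref{Cor_KM} then produces continuous $b,c\in\mathscr{C}(\Omega,\mathbb{R}^{N})$ satisfying \eqref{TA}, together with $c\equiv 0$ when $k=1$ and $b\equiv c\equiv 0$ when $k=0$. In the subregions where $A$ is not non-degenerate, or does not depend non-trivially on the derivative, one argues directly: combined with $A(1)=0$ either failure forces $A$ to vanish on the part of $\Omega$ where $c$ must be $0$, and there the first-order Proposition above gives $T(f)=\langle b,f'\rangle$; the local descriptions are glued by continuity of $T$ and $A$ on the relevant functions and the connectedness of $\Omega$. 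The converse implication is the routine computation already recorded after Corollary \ref{Cor_KM}.

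The step I expect to be the main obstacle is exactly this propagation to all of $\mathscr{C}^{k}(\Omega,\mathbb{R})$: in the usual $\mathscr{C}^{k}$-topology over an unbounded domain the ``good'' class $\{f:\ f^{2}-1\text{ is nowhere zero}\}$ — and even $\{f:\ f^{2}-1\text{ is bounded away from }0\}$ — has empty interior and is meager, so neither Corollary \ref{Cor2} nor Corollary \ref{Cor2k} can be applied to those sets directly; one has instead to work inside a subring carrying a finer ring topology on which the relevant order intervals become open (for instance the bounded $\mathscr{C}^{k}$-functions with the uniform topology) and afterwards treat the unbounded $f$ by a separate reduction. The degenerate case, in which $c$ vanishes on a proper closed subset of $\Omega$ so that K\"{o}nig--Milman cannot be invoked globally, is a further source of technical bookkeeping.
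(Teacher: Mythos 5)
Your opening moves coincide with the paper's: establishing $T(1)=0$, applying \eqref{Eq7} to $f-1$, $f+1$ and $f^{2}-1$, combining them through the partial-fraction identity, clearing the denominator $(f^{2}-1)^{3}$, and then extracting \eqref{Eq2} by viewing a parametrized family of such identities as a polynomial whose coefficients must vanish. The genuine gap lies in the parametrization you chose and in the class of functions it covers. By insisting on translations $f\mapsto f+s$ with \emph{small} rational $s$, you obtain \eqref{Eq2} only for $f$ bounded away from $\pm 1$ (and, after scaling, for bounded $f$), and you concede yourself that the passage from this class to all of $\mathscr{C}^{k}(\Omega,\mathbb{R})$ is the unresolved ``main obstacle''. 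The proposed detour through a subring of bounded functions with a finer topology would, via Corollary \ref{Cor2}, only yield \eqref{Eq1} on that subring, whereas Corollary \ref{Cor_KM} requires the second-order Leibniz rule for all pairs in $\mathscr{C}^{k}(\Omega,\mathbb{R})$; the promised ``separate reduction'' for unbounded $f$ and the gluing in degenerate regions are never carried out. As written, the decisive propagation step is therefore missing.

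The paper avoids this impasse with a different parametrization. Since $\Omega$ is connected, any $f$ with $f^{2}\neq 1$ pointwise satisfies exactly one of $f>1$, $-1<f<1$, $f<-1$, so one can choose at least five rationals $r$ with $r^{2}f^{2}\neq 1$ pointwise; replacing $f$ by $rf$ and using $\mathbb{Q}$-homogeneity turns the cleared identity into a polynomial in $r$ whose coefficient at $r^{2}$ is exactly $T(f^{2})-2fT(f)-2A(f)^{2}$. This gives \eqref{Eq2} on the whole set $U=\{f:\ f^{2}(x)\neq 1 \text{ for all } x\in\Omega\}$, unbounded functions included, after which the paper applies Corollary \ref{Cor2} to $U$ (treated as a nonempty open subset of $\mathscr{C}^{k}(\Omega,\mathbb{R})$) and concludes with Corollary \ref{Cor_KM}. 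Your translation device could be repaired in the same spirit by taking $s$ large according to the trichotomy (for instance $s\geq 2$ when $|f|<1$, $s\geq 0$ when $f>1$, $s\leq 0$ when $f<-1$) instead of small, which eliminates the bounded-subring detour altogether. Your side remarks, that the good set is not open in the compact-open topology and that the non-degeneracy hypotheses of K\"{o}nig--Milman are nowhere verified, do point at assumptions the paper uses tacitly, but they do not replace the missing extension argument in your own proposal.
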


\begin{proof}
 {Under the assumptions of the theorem, if} we substitute the identically $1$ function in place of $f$ in equation \eqref{Eq7}, then 
\[
 T(1)=-T(1)+2A(1)^{2}
\]
follows. Since $A(1)=0$, then $T(1)=0$, as well.

Let now $f\in \mathscr{C}^{k}(\Omega, \mathbb{R})$ be a function for which we have 
\[
  {f(x)^{2}} \neq 1
\]
for all $x\in \Omega$. 
Since $\Omega$ is a connected set and $f$ is continuous, then we have precisely three options: either $f>1$, $f(x) \in (-1,1)$ for all $x \in \Omega$, or $f<-1$.
If we put $f^{2}-1$ in place of $f$ in equality \eqref{Eq7}, then 
\[
  T\left(\frac{1}{f^{2}-1}\right)= -\frac{1}{(f^{2}-1)^{2}}T(f^{2}-1)+\frac{2}{(f^{2}-1)^{3}}A(f^{2}-1)^{2},
\]
 or after multiplying both sides of this equation with $2$:
\[
  T\left(\frac{2}{f^{2}-1}\right)= -\frac{2}{(f^{2}-1)^{2}}T(f^{2}-1)+\frac{4}{(f^{2}-1)^{3}}A(f^{2}-1)^{2}.
\]
At the same time, due to the additivity of the operator $T$, we have 
\[
  T\left(\frac{2}{f^{2}-1}\right)= T\left(\frac{1}{f-1}-\frac{1}{f+1}\right)=
  T\left(\frac{1}{f-1}\right)-T\left(\frac{1}{f+1}\right)
\]
Moreover, equation \eqref{Eq7} appplied for $f-1$ and $f+1$, resp. yield that 
\[
 T\left(\frac{1}{f-1}\right)= -\frac{1}{(f-1)^{2}}T(f-1)+\frac{2}{(f-1)^{3}}A(f-1)^{2}
\]
and 
\[
 T\left(\frac{1}{f+1}\right)= -\frac{1}{(f+1)^{2}}T(f+1)+\frac{2}{(f+1)^{3}}A(f+1)^{2}. 
\]
Combining the above identities, we get
\begin{multline*}
 -\frac{2}{(f^{2}-1)^{2}}T(f^{2}-1)+\frac{4}{(f^{2}-1)^{3}}A(f^{2}-1)^{2}
 \\
 =
  T\left(\frac{2}{f^{2}-1}\right)
 =
   T\left(\frac{1}{f-1}\right)-T\left(\frac{1}{f+1}\right)
  \\
  =
   \left[-\frac{1}{(f-1)^{2}}T(f-1)+\frac{2}{(f-1)^{3}}A(f-1)^{2}\right]
  \\
  -
   \left[-\frac{1}{(f+1)^{2}}T(f+1)+\frac{2}{(f+1)^{3}}A(f+1)^{2} \right].
\end{multline*} 
Since $A(1)=0$ and $T(1)=0$, after some rearrangement we deduce that 
\begin{multline*}
 -\frac{2}{(f^{2}-1)^{2}}T(f^{2})+\frac{4}{(f^{2}-1)^{3}}A(f^{2})^{2}
  \\
  =
 -\frac{1}{(f-1)^{2}}T(f)+\frac{2}{(f-1)^{3}}A(f)^{2}
 +\frac{1}{(f+1)^{2}}T(f)-\frac{2}{(f+1)^{3}}A(f)^{2} 
\end{multline*}
i.e., 
\[
 -\frac{2}{(f^{2}-1)^{2}}T(f^{2})+\frac{4}{(f^{2}-1)^{3}}A(f^{2})^{2}
 =
 -\frac{4f}{(f^2-1)^{2}}T(f)+\frac{12f^2+4}{(f^{2}-1)^3} A(f)^{2}. 
\]
Let us multiply now both sides of this equation with $(f^{2}-1)^{3}$ to get that 
\[
 (f^2-1)\left[T(f^{2})-2fT(f)-2A(f)^{2}\right]= 2 \left[A(f^{2})^{2}-4f^{2}A(f)^{2}\right].
\]
It is clear that one can find at least five distinct rationals $\{r_1, \dots , r_5\}$ such that $r^2_if(x)^{2}\neq 1$ for all $x \in \Omega$ and each $i=1, \dots , 5$ and
let $$H = \{r\in\mathbb{Q} : r^2f(x)^{2}\neq 1\textrm{ for all } x \in \Omega\}.$$ Next, apply the last formula  for $f$ replaced by $rf$ with $r \in H$. Then due to the $\mathbb{Q}$-homogeneity of the operators $T$ and $A$, for all $r\in H$ we have 
\begin{multline*}
 r^{4}  {f^{2}} \left\{ \left[T(f^{2})-2fT(f)-2A(f)^{2}\right]-2\left[A(f^{2})^{2}-4f^{2}A(f)^{2}\right] \right\}
 \\
 + r^{2}\left\{ T(f^{2})-2fT(f)-2A(f)^{2} \right\} =0. 
\end{multline*}
Observe that the left-hand side of this equation is a (classical) polynomial with respect to $r\in H$ and it is identically zero. Since $H$ contains at least five elements,  all coefficients of this polynomial vanish. Especially, the coefficient corresponding to $r^{2}$ has to vanish, yielding that 
 {\eqref{Eq2} holds for any function $f\in \mathscr{C}^{k}(\Omega, \mathbb{R})$ for which $f^{2}(x)\neq 1$ for all $x\in \Omega$. Since the set 
\[
 U= \left\{f\in \mathscr{C}^{k}(\Omega, \mathbb{R}) \, \vert \, f^{2}(x)\neq 1 \text{ for all }x\in \Omega \right\}
\]
is a nonempty and open subset of $\mathscr{C}^{k}(\Omega, \mathbb{R})$,}
with the help of  {Corollaries \ref{Cor_KM} and \ref{Cor2}}, the statement of our theorem follows. 
\end{proof}

\begin{ackn}
 The research of E.~Gselmann has been supported by project no.~K134191 that has been implemented with the support provided by the National Research, Development and Innovation Fund of Hungary,
financed under the K\_20 funding scheme. 

The work of Aleksandra Świątczak is implemented under the project ``Curriculum for advanced doctoral education \&
training - CADET Academy of TUL'' co-financed by the STER Programme - Internationalization of
doctoral schools.

This article has been completed while one of the authors – Aleksandra Świątczak, was the
Doctoral Candidate in the Interdisciplinary Doctoral School at the Lodz University of Technology, Poland.
\end{ackn}

\end{document}